\documentclass[11pt,reqno,a4paper]{amsart}

\usepackage{amsmath,amsthm,verbatim,amscd,amssymb,setspace,enumitem,hyperref,exscale,color}

\renewcommand{\epsilon}{\varepsilon}

\newtheoremstyle{fancy}{}{}{\itshape}{}{\textbf\bgroup}{.\egroup}{ }{}
\newtheoremstyle{fancy2}{}{}{\rm}{}{\textbf\bgroup}{.\egroup}{ }{}

\theoremstyle{fancy}
\newtheorem{theorem}{Theorem}[section]

\newcounter{mtheorem}
\newtheorem{mtheorem}[mtheorem]{Theorem}

\newtheorem{mcor}[mtheorem]{Corollary}

\setcounter{mtheorem}{0}

\theoremstyle{fancy2}

\textheight250mm
\textwidth167mm

\addtolength{\topmargin}{-15mm}
\addtolength{\oddsidemargin}{-20mm}
\addtolength{\evensidemargin}{-20mm}

\numberwithin{equation}{section}

\begin{document}
\title{On the Smoothability of certain K\"ahler cones}
\date{\today}
\author{Ronan J.~Conlon}
\address{D\'epartement de Math\'ematiques, Universit\'e du Qu\'ebec \`a Montr\'eal, Case Postale 8888, Succursale
Centre-ville, Montr\'eal (Qu\'ebec), H3C 3P8, Canada}
\email{rconlon@cirget.ca}
\date{\today}
\begin{abstract}
Let $D$ be a Fano manifold that may be realised as $\mathbb{P}(\mathcal{E})$ for some rank $2$ holomorphic vector bundle $\mathcal{E}\longrightarrow Z$ over a Fano manifold $Z$. Let $k\in\mathbb{N}$ divide $c_{1}(D)$. We classify those K\"ahler cones 
of dimension $\leq4$ of the form $(\frac{1}{k}K_{D})^{\times}$ that are smoothable. As a consequence, we find that any irregular Calabi-Yau cone of dimension $\leq 4$ of this form does not admit a smoothing, leaving $K_{\mathbb{P}^{2}_{(2)}}^{\times}$ as currently the only known example of a smoothable irregular Calabi-Yau cone in these dimensions.
\end{abstract}
\maketitle
\markboth{Ronan J.~Conlon}{On the smoothability of certain K\"ahler cones}

\section{Introduction}

By a \emph{smoothing} of a Calabi-Yau cone, we mean a smooth affine variety that is asymptotic to the cone at infinity. It is in particular a deformation of the cone, which, although affine, can in general be singular and does not necessarily have to be asymptotic to the cone at infinity; see \cite[Remarks 5.3 and 5.7]{Conlon}. Interest in examples of smoothings of Calabi-Yau cones stems from the fact that they are one natural starting point for the construction of asymptotically conical (AC) Calabi-Yau manifolds. Indeed, given a smoothing, one would hope to endow it with an AC Calabi-Yau metric; see for example \cite[Section 5]{Conlon}. With this in mind, we would like to identify which of the new examples of \emph{irregular} Calabi-Yau cones recently appearing in the literature are smoothable. At present, unlike the situation in the \emph{regular} case \cite{Conlon3, Tian}, there is no general theorem in the literature asserting the existence of AC Calabi-Yau metrics on a smoothing of an irregular Calabi-Yau cone. However, one example of such a smoothing admitting AC Calabi-Yau metrics is already known. Writing $\mathbb{P}^{2}_{(k)}$ to denote the blowup of $\mathbb{P}^{2}$ at $k$ points in general position and $L^{\times}$ to denote the blowdown of the zero section of a negative holomorphic line bundle $L$ over a compact K\"ahler manifold, it is shown in \cite{futaki} that the K\"ahler cone $K_{\mathbb{P}_{(2)}^{2}}^{\times}$ is an irregular Calabi-Yau cone, which by \cite{Altmann} admits a smoothing unique up to biholomorphism. AC Calabi-Yau metrics were then constructed on this smoothing in \cite{Conlon3}.

Now, the first examples of irregular Calabi-Yau cones were constructed in \cite{Sparksy1}. However, as toric examples, by looking at their toric diagram \cite[Figure 2]{Sparky}, they are seen to be rigid by \cite{Altmann}. In \cite{Cvetic2, Cvetic1}, these examples were shown to fit into a larger family of toric Calabi-Yau cones, but again, using \cite{Altmann}, one can read from their toric diagrams \cite[Figure 2]{Sparks23} that this family too comprises of rigid Calabi-Yau cones. More toric examples were then constructed in \cite{futaki}, where it is not only shown that $K_{\mathbb{P}_{(2)}^{2}}^{\times}$ is a Calabi-Yau cone, but that $K_{D}^{\times}$ admits a (potentially irregular) toric Calabi-Yau cone metric for $D$ a toric Fano manifold, not necessarily admitting a K\"ahler-Einstein metric. By \cite{Altmann}, these Calabi-Yau cones are all rigid in dimensions $n\geq4$, and the only other $3$-dimensional irregular example we find in \cite{futaki}, namely $K_{\mathbb{P}_{(1)}^{2}}^{\times}$, is also rigid; see \cite[Section 9]{Altmann}. (The Calabi-Yau cone $K_{\mathbb{P}_{(1)}^{2}}^{\times}$ was also one of the irregular examples of \cite{Sparksy1}; see \cite[Section 7]{Sparkss}.) More recently, examples of (once again, potentially irregular) non-toric Calabi-Yau cones, including those examples found in the extension \cite{Sparkles} of \cite{Sparksy1}, were given by \cite[Theorem 1.5]{mabuchi}. In this article, we address the issue of the smoothability of the examples in low dimensions resulting from this theorem.

Our main result is the following. We write $\mathbb{Q}^{n}$ to denote the smooth quadric hypersurface in $\mathbb{P}^{n+1}$.
\pagebreak

\begin{mtheorem}
Let $D$ be a Fano manifold of dimension $\leq 3$ that may be realised as $\mathbb{P}(\mathcal{E})$ for some rank $2$ holomorphic vector bundle $\mathcal{E}\longrightarrow Z$ over a Fano manifold $Z$. Let $k\in\mathbb{N}$ divide $c_{1}(D)$. Then $(\frac{1}{k}K_{D})^{\times}$ is smoothable if and only if
\begin{enumerate}
\item[{\rm (i)}] $D=\mathbb{P}^{1}$ and $k=1$. In this case, the smoothing is biholomorphic to $T^{*}\mathbb{S}^{2}$.
\item[{\rm (ii)}] $D=\mathbb{P}^{1}\times\mathbb{P}^{1}$ and $k=1$. In this case, the smoothing is biholomorphic to $\mathbb{P}^{3}\setminus\operatorname{quadric}=T^{*}\mathbb{RP}^{3}$.
\item[{\rm (iii)}] $D=\mathbb{P}^{1}\times\mathbb{P}^{1}$ and $k=2$. In this case, the smoothing is biholomorphic to $T^{*}\mathbb{S}^{3}$.
\item[{\rm (iv)}] $D=\mathbb{P}_{\mathbb{P}^{2}}(T_{\mathbb{P}^{2}})$ and $k=2$. In this case, the smoothing is biholomorphic to the
complement of a hyperplane section of the Segre embedding of $\mathbb{P}^{2}\times\mathbb{P}^{2}$ in $\mathbb{P}^{8}$.
\end{enumerate}
\end{mtheorem}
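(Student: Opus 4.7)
\medskip

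\noindent The plan is to enumerate all admissible pairs $(D,k)$ via the classification of low-dimensional Fano projective bundles, and then to settle smoothability case by case. For $\dim D = 1$ the only option is $D=\mathbb{P}^{1}$; for $\dim D = 2$, the Fano condition on Hirzebruch surfaces forces $D\in\{\mathbb{F}_{0},\mathbb{F}_{1}\}$; for $\dim D = 3$, the possibilities are the Fano $\mathbb{P}^{1}$-bundles over a Fano surface $Z\in\{\mathbb{P}^{2},\mathbb{F}_{0},\mathbb{F}_{1},\mathrm{dP}_{k}\}$, catalogued by the Mori-Mukai classification of Fano threefolds of Picard rank at least two. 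For each such $D$ I would compute $c_{1}(D)\in\operatorname{Pic}(D)$ and enumerate its divisors $k$, reducing the problem to a finite list of cases.

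\medskip

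\noindent For the existence part in (i)--(iv), I would realize each smoothing as the complement $\bar{X}\setminus D$ of $D$ in a smooth projective variety $\bar{X}$ containing $D$ as a smooth divisor with $N_{D/\bar{X}}\cong -\tfrac{1}{k}K_{D}$. The normal-bundle condition, combined with standard arguments (see \cite[Section 5]{Conlon}), ensures the complement is affine and asymptotic to $(\tfrac{1}{k}K_{D})^{\times}$ at infinity. Concretely, (i) uses $\bar{X}=\mathbb{F}_{2}$ with $D$ the infinity section, so that $\bar{X}\setminus D\cong\mathcal{O}_{\mathbb{P}^{1}}(-2)\cong T^{*}\mathbb{S}^{2}$; (ii) uses $\bar{X}=\mathbb{P}^{3}$ with $D$ a smooth quadric; (iii) uses the smooth quadric threefold $Q^{3}\subset\mathbb{P}^{4}$ with $D$ a hyperplane section, biholomorphic to $\mathbb{P}^{1}\times\mathbb{P}^{1}$, yielding the standard conifold smoothing; and (iv) uses $\bar{X}=\mathbb{P}^{2}\times\mathbb{P}^{2}$ Segre-embedded in $\mathbb{P}^{8}$ with $D$ a smooth divisor of class $(1,1)$, biholomorphic to the flag threefold $\mathbb{P}(T_{\mathbb{P}^{2}})$. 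In each case, the adjunction formula directly verifies the normal-bundle identity.

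\medskip

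\noindent For the non-smoothable cases, I would split the argument according to whether $D$ is toric. When $D$ is toric, so is the cone, and I would apply Altmann's algorithm \cite{Altmann} to read off rigidity from the combinatorics of the moment polytope: a smoothing component of the versal deformation corresponds to a nontrivial Minkowski decomposition of the associated lattice polytope, and one verifies its absence in each remaining toric case. When $D$ is non-toric --- e.g.\ $\mathbb{P}^{1}$-bundles over genuine del Pezzo surfaces $\mathrm{dP}_{k}$ with $k\geq 1$, or $\mathbb{P}(T_{\mathbb{P}^{2}})$ with $k=1$ --- I would argue that any smoothing $X$ must admit a smooth projective compactification $\bar{X}\supset D$ with $N_{D/\bar{X}}=-\tfrac{1}{k}K_{D}$, and then exclude such $(\bar{X},D)$ by comparing with the classification of smooth Fano fourfolds containing the given $D$ as a smooth divisor of the prescribed class.

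\medskip

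\noindent I expect the main obstacle to be the non-toric half of the non-smoothability argument. Altmann's combinatorial criterion is unavailable there, and the reduction of an abstract affine smoothing to a smooth projective compactification is itself delicate: it must be justified using the $\mathbb{C}^{*}$-equivariant structure of the cone together with the asymptotic conical geometry at infinity, since smoothings of cones need not a priori extend to smooth projective varieties. Moreover, the relevant classification of smooth Fano fourfolds containing a prescribed threefold as a smooth ample divisor is not fully explicit in the literature, so a significant portion of the work will consist of direct, case-by-case verification via adjunction and normal-bundle calculations on the finite list of candidate pairs $(\bar{X},D)$.
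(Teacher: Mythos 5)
Your outline has the right overall shape (classify the candidate $D$, then decide smoothability via compactification), but it contains one concrete error and one genuine gap, both traceable to the same missing ingredient: the precise smoothability criterion. The paper's entire proof rests on \cite[Proposition 5.1(i)]{Conlon}: $(\frac{1}{k}K_{D})^{\times}$ is smoothable if and only if $D$ embeds in a \emph{Fano} manifold $X$ of one dimension more and of index at least $2$ with $-K_{X}=(k+1)[D]$, in which case the smoothing is $X\setminus D$. Your substitute condition --- a smooth projective $\bar{X}\supset D$ with $N_{D/\bar{X}}\cong-\tfrac{1}{k}K_{D}$ --- is strictly weaker and not sufficient, and your own case (i) shows why: $\mathbb{F}_{2}$ minus its positive section does satisfy the normal-bundle identity, but the complement is the total space of $\mathcal{O}_{\mathbb{P}^{1}}(-2)$, i.e.\ the crepant \emph{resolution} of the cone $K_{\mathbb{P}^{1}}^{\times}=\mathbb{C}^{2}/\mathbb{Z}_{2}$; it contains a compact $(-2)$-curve, is not affine, and is not biholomorphic to the affine quadric $T^{*}\mathbb{S}^{2}$. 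The obstruction is that $\mathbb{F}_{2}$ is not Fano ($-K_{\mathbb{F}_{2}}$ is only nef), and it is precisely ampleness of $[D]$ on a Fano $X$ that makes the complement affine and a genuine deformation of the cone. The correct compactification for (i) is $(\mathbb{P}^{1}\times\mathbb{P}^{1},\,\Delta)$ with $-K_{X}=2[\Delta]$, as in Theorem \ref{dunno}(i).

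On the non-smoothability side you correctly flag the reduction ``abstract affine smoothing $\Rightarrow$ smooth compactification of the prescribed type'' as the main obstacle, but you leave it unresolved; that reduction \emph{is} \cite[Proposition 5.1(i)]{Conlon}, and without it your argument for the non-toric cases does not close. (The toric fallback via \cite{Altmann} is legitimate for toric $D$ --- the introduction of the paper uses it for the earlier toric examples --- but it cannot reach, say, $\mathbb{P}^{1}$-bundles over genuine del Pezzo surfaces.) Once the criterion is in hand, the remaining classification is finite but must be organised; the paper does this by exploiting the index $\alpha=k+1\geq2$: the cases $\alpha=\dim X+1$ and $\alpha=\dim X$ force $X=\mathbb{P}^{n}$ or $\mathbb{Q}^{n}$, the case $\alpha=\dim X-1$ is the del Pezzo classification \cite[Table 12.1]{Fano} combined with the divisibility $(-K_{D})^{3}=8D^{4}$, and $\alpha=2$ in dimension $4$ is settled by pairing the Lefschetz equality $\rho(X)=\rho(D)$ with Wilson's degree bounds for Fano fourfolds of the first species and the Szurek--Wi\'sniewski list of Fano $\mathbb{P}^{1}$-bundle threefolds. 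You would need to supply both the criterion and this systematic degree/Picard-number bookkeeping for the proof to be complete.
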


Since every instance of $D$ in Theorem A admits a K\"ahler-Einstein metric, the corresponding smoothable Calabi-Yau cone $(\frac{1}{k}K_{D})^{\times}$ must be regular, and in each case, Stenzel \cite{Stenzel} has constructed an explicit AC Calabi-Yau metric on the smoothing (see also \cite{Conlon3, Tian}). Thus, we arrive at the following corollary.
\begin{mcor}
Let $D$ and $k$ be as in Theorem A. Then every smoothable Calabi-Yau cone of dimension $\leq 4$ of the form $(\frac{1}{k}K_{D})^{\times}$ is regular.
\end{mcor}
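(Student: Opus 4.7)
The plan is to deduce the corollary as a direct consequence of Theorem A. I would proceed in three short steps.

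First, I invoke Theorem A to restrict attention to the four specific cases (i)--(iv). In each case, the base Fano manifold $D$ is one of $\mathbb{P}^{1}$, $\mathbb{P}^{1}\times\mathbb{P}^{1}$, or $\mathbb{P}_{\mathbb{P}^{2}}(T_{\mathbb{P}^{2}})$, and $k$ is a fixed small integer dividing $c_{1}(D)$.

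Second, I would observe that each of these $D$'s carries a K\"ahler--Einstein metric. For $\mathbb{P}^{1}$ and $\mathbb{P}^{1}\times\mathbb{P}^{1}$, the (product of) Fubini--Study metric(s) does the job. For $D=\mathbb{P}_{\mathbb{P}^{2}}(T_{\mathbb{P}^{2}})$, using the Euler sequence on $\mathbb{P}^{2}$ one identifies $D$ with the complete complex flag manifold $SU(3)/T^{2}$, which is homogeneous and hence carries an invariant K\"ahler--Einstein metric by the standard theory of K\"ahler $C$-spaces.

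Third, given a K\"ahler--Einstein metric on $D$, the Calabi ansatz on the negative line bundle $L:=\frac{1}{k}K_{D}$ (well-defined as a holomorphic line bundle precisely because $k\mid c_{1}(D)$) produces an explicit Calabi--Yau cone metric on $L^{\times}=(\frac{1}{k}K_{D})^{\times}$. The Reeb vector field of the associated Sasaki--Einstein structure on the link is, up to a positive constant, the infinitesimal generator of the standard $S^{1}$ action by fibre rotation on $L$. Since this $S^{1}$ action is free on $L^{\times}$ and the quotient is the smooth manifold $D$, the Reeb foliation is a genuine $S^{1}$-bundle, and hence the cone is regular by definition.

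There is essentially no obstacle here; the only slightly non-obvious point is the K\"ahler--Einstein property in case (iv), which is immediate once one recognises the flag-manifold description. Combining the three steps, every smoothable Calabi-Yau cone of the prescribed form is regular, proving the corollary.
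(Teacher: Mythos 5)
Your proposal is correct and takes essentially the same route as the paper, which disposes of the corollary in a single sentence: every $D$ appearing in Theorem A admits a K\"ahler--Einstein metric, and hence the corresponding Calabi--Yau cone $(\frac{1}{k}K_{D})^{\times}$ is regular. Your additional details (the flag-manifold description of $\mathbb{P}_{\mathbb{P}^{2}}(T_{\mathbb{P}^{2}})$ and the Calabi-ansatz/free $S^{1}$-action argument) merely make explicit what the paper leaves implicit.
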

\noindent In particular, we see that none of the irregular Calabi-Yau cones of dimension $\leq 4$ resulting from \cite[Theorem 1.5]{mabuchi} are smoothable. This leaves $K^{\times}_{\mathbb{P}^{2}_{(2)}}$ as currently the only known example of a smoothable irregular Calabi-Yau cone
in dimensions $\leq 4$.

We remark that if \cite[Conjecture 5.5.1]{Bela} holds true, then in fact the Calabi-Yau cones of Theorem \nolinebreak A would be the \emph{only} examples (in all dimensions) of smoothable Calabi-Yau cones  of the form $(\frac{1}{k}K_{D})^{\times}$ for $D$ Fano and taking the form $\mathbb{P}(\mathcal{E})$ for some rank $2$ holomorphic vector bundle $\mathcal{E}\longrightarrow Z$ over a Fano manifold $Z$, and for $k$ dividing $c_{1}(D)$. In particular, this would leave $K^{\times}_{\mathbb{P}^{2}_{(2)}}$ as currently the only known example in \emph{all} dimensions of a smoothable irregular Calabi-Yau cone.

Our proof of Theorem A follows from an observation in \cite{Conlon} that for $D$ Fano, $(\frac{1}{k}K_{D})^{\times}$ is smoothable if and only if $D$ appears as an anti-canonical divisor in a Fano manifold of one dimension greater. This result, combined with the classification theory of Fano manifolds, then yields Theorem A.

\subsection*{Acknowledgements}

The author wishes to thank Hans-Joachim Hein for comments on a preliminary version of this article.

\section{Proof of Theorem A}

We define the index $r(X)$ of a Fano manifold $X$ as $$r(X)=\max\{k\in\mathbb{N}\,|\,\textrm{$k$ divides $c_{1}(X)$ in $\operatorname{Pic}(X)$}\}.$$
The index $r(X)$ of a Fano manifold $X$ of dimension $n$ satisfies $r(X)\leq n+1$ with $r(X)=n+1$ if and only if $X=\mathbb{P}^{n}$, and $r(X)=n$ if and only if $X=\mathbb{Q}^{n}$ \cite[Corollary 3.1.15]{Fano}. If $r(X)=n-1$, then we call $X$ a \emph{del Pezzo variety} or a \emph{del Pezzo $n$-fold}.

Let $D$ be a Fano manifold of dimension $n-1$. By \cite[Proposition 5.1(i)]{Conlon}, the cone $(\frac{1}{k}K_{D})^{\times}$ is smoothable with smoothing $M$ if and only if $M =X\setminus D$ for some $n$-dimensional Fano manifold $X$ of index at least $2$ containing $D$ as an anti-canonical divisor such that $-K_{X}=(k+1)[D]$ (where, here and throughout, we write $[D]$ to denote the line bundle on $X$ induced by $D$). Thus, Theorem A will be a consequence of the following theorem.

\begin{theorem}\label{dunno}
Let $X$ be a Fano manifold of dimension $\leq 4$ and let $D$ be a smooth divisor in $X$ satisfying $-K_{X}=\alpha[D]$ for some $\alpha\in\mathbb{N},\,\alpha\geq2$. Assume that $D=\mathbb{P}(\mathcal{E})$ for some rank $2$ holomorphic vector bundle $\mathcal{E}\longrightarrow Z$ over a Fano manifold $Z$. Then the only possibilities for $\alpha$ and $(X,\,D)$ are:
\begin{enumerate}
  \item[{\rm (i)}] $\alpha=2$ and $(X,\,D)=(\mathbb{P}^{1}\times\mathbb{P}^{1},\,\mathbb{P}^{1}),\,(\mathbb{P}^{3},\,\mathbb{P}^{1}\times\mathbb{P}^{1});$ or
  \item[{\rm (ii)}] $\alpha=3$ and $(X,\,D)=(\mathbb{P}^{2},\,\mathbb{P}^{1}),\,(\mathbb{Q}^{3},\,\mathbb{P}^{1}\times\mathbb{P}^{1}),\,
(\mathbb{P}^{2}\times\mathbb{P}^{2},\,\mathbb{P}_{\mathbb{P}^{2}}(T_{\mathbb{P}^{2}})).$
\end{enumerate}
\end{theorem}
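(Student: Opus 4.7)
My proof would combine adjunction on $D \subset X$ with the classifications of Fano manifolds of high index, handled case by case in $\dim X$.

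I would start by using the projective bundle structure of $D$ to constrain $\alpha$. Writing $\pi : D = \mathbb{P}(\mathcal{E}) \to Z$ with tautological class $\xi$, the standard formula
\[
-K_{D} = 2\xi + \pi^{*}(-K_{Z} - c_{1}(\mathcal{E}))
\]
together with adjunction $-K_{D} = (\alpha - 1)[D]|_{D}$ and the decomposition $\operatorname{Pic}(D) = \mathbb{Z}\xi \oplus \pi^{*}\operatorname{Pic}(Z)$ forces $\alpha - 1$ to divide $2$. Hence $\alpha \in \{2, 3\}$, and $[D]|_{D}$ is ample so that $D$ is Fano.

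For $\dim X = 2$, $D = \mathbb{P}^{1}$ and $r(X) \geq 2$ restricts $X$ to $\mathbb{P}^{2}$ or $\mathbb{P}^{1} \times \mathbb{P}^{1}$, yielding the two entries in the theorem for this dimension. For $\dim X = 3$, $Z = \mathbb{P}^{1}$ and $D$ is a Fano Hirzebruch surface, so $D \in \{\mathbb{F}_{0}, \mathbb{F}_{1}\}$, both of anticanonical degree $8$. The case $\alpha = 3$ immediately gives $(X, D) = (\mathbb{Q}^{3}, \mathbb{F}_{0})$; for $\alpha = 2$ I would note that $\mathbb{Q}^{3}$ fails by non-integrality of $-K_{X}/2$, that $\mathbb{P}^{3}$ gives the valid pair with $D \in |2H|$ a smooth quadric surface, and that all del Pezzo $3$-folds $V_{d}^{3}$ are excluded since their hyperplane sections have anticanonical degree $d \leq 7 < 8$ by Fujita's classification.

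For $\dim X = 4$ and $\alpha = 3$, the condition $r(X) \geq 3$ combined with integrality of $[D]$ reduces $X$ to a del Pezzo $4$-fold, with $D$ a del Pezzo $3$-fold of matching degree. Among del Pezzo $3$-folds only the degree-$6$ cases and $V_{7}$ are projective bundles over Fano surfaces; Fujita's theorem that $V_{6}^{4} = \mathbb{P}^{2} \times \mathbb{P}^{2}$ uniquely then isolates case (ii), with $D$ the $(1,1)$-divisor $\mathbb{P}(T_{\mathbb{P}^{2}})$. For $\alpha = 2$ and $\dim X = 4$, I would run through the Fano $4$-folds of index $\geq 2$: $\mathbb{P}^{4}$ and the del Pezzo $4$-folds fail integrality; $\mathbb{Q}^{4}$ yields $D = V_{4}^{3}$, a Picard-rank-one variety not of projective bundle form; for Mukai's index-$2$ Fano $4$-folds of Picard rank $1$, Lefschetz gives $\operatorname{Pic}(D)$ of rank $1$, incompatible with $D = \mathbb{P}(\mathcal{E})$ over a Fano surface; and for the higher Picard rank examples (products such as $\mathbb{P}^{1} \times V^{3}$ or $(\mathbb{P}^{1})^{4}$, and various projective bundles) I would exclude $D$ from being a $\mathbb{P}^{1}$-bundle by direct computation of $D$ in each case.

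The principal obstacle will be the $\alpha = 2$, $\dim X = 4$ analysis, since it requires traversing Mukai's classification of index-$2$ Fano $4$-folds and verifying in each instance that the predicted divisor cannot have the form $\mathbb{P}(\mathcal{E}) \to Z$ for $Z$ a Fano surface. The remaining cases follow smoothly from Fujita's classification of del Pezzo manifolds and elementary divisibility considerations.
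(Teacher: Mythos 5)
Your overall strategy is the same as the paper's: a case analysis on $\dim X$ and $\alpha$, using adjunction $-K_{D}=(\alpha-1)[D]|_{D}$, the classification of Fano manifolds of index $\geq2$ (Fujita's del Pezzo manifolds, the tables of index-$2$ Fano fourfolds, Wilson/Mukai in Picard rank one), the Szurek--Wi\'sniewski list of Fano threefolds with $\mathbb{P}^{1}$-bundle structure, and the Lefschetz theorem to match Picard numbers. Two of your steps are genuine streamlinings not in the paper. First, restricting $-K_{D}=(\alpha-1)[D]|_{D}$ to a fibre $F$ of $\pi$ gives $(\alpha-1)\deg([D]|_{F})=2$ with $\deg([D]|_{F})\geq1$ by ampleness, so $\alpha\in\{2,3\}$ from the outset, whereas the paper eliminates $\alpha=4,5$ case by case; your identification of the smooth $|2H|$-divisor in $\mathbb{Q}^{4}$ as the $(2,2)$-intersection $V_{4}$ (rather than the paper's $\mathbb{Q}^{3}$) is also the more accurate statement, though both are excluded for the same reason. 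Second, for $\alpha=2$, $\dim X=4$, $\rho(X)=1$, your argument is a one-liner --- Lefschetz forces $\rho(D)=1$ while $D=\mathbb{P}(\mathcal{E})\to Z$ over a Fano surface has $\rho(D)=\rho(Z)+1\geq2$ --- where the paper first routes through Wilson's degree bounds to identify candidate $D$'s before reaching the same contradiction. The one place you underestimate the work is $\alpha=2$, $\dim X=4$, $\rho(X)\geq2$, which you defer to ``direct computation in each case''; the paper's concrete mechanism is worth adopting there: read $D^{4}=(-K_{D})^{3}$ and $\rho(D)=\rho(X)$ off the table of index-$2$ Fano fourfolds of higher Picard rank (giving $2\leq\rho\leq4$ and $D^{4}\leq40$), intersect with the degrees on the Szurek--Wi\'sniewski list (leaving only $(-K_{D})^{3}=30,36,38$), and check that the single surviving match is not a $\mathbb{P}^{1}$-bundle. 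Without some such degree bookkeeping that case does not ``follow smoothly,'' but your plan is sound and complete in outline.
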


\noindent Notice that, by adjunction, the assumptions on $X$ and $D$ here imply that $D$ is necessarily Fano. By \cite[Theorem 1.6]{Szurek3}, it subsequently follows that $Z$ is also necessarily Fano.

\begin{proof}[Proof of Theorem \ref{dunno}]
First suppose that $\dim X=2$. Then $-K_{X}=\alpha[D]$, where the only possibilities for $\alpha$ are $\alpha=2,\,3$.
If $\alpha=2$, then we have that $(X,\,D)=(\mathbb{P}^{1}\times\mathbb{P}^{1},\,\mathbb{P}^{1})$. If $\alpha=3$, then we have that $(X,\,D)=(\mathbb{P}^{2},\,\mathbb{P}^{1})$.

Next suppose that $\dim X=3$. Then $-K_{X}=\alpha[D]$, where the only possibilities for $\alpha$ are $\alpha=2,\,3,\,4$. In this case, $D=\mathbb{P}^{1}\times\mathbb{P}^{1}$ or $D=\mathbb{P}_{\mathbb{P}^{1}}(\mathcal{O}_{\mathbb{P}^{1}}(1)\oplus\mathcal{O}_{\mathbb{P}^{1}})=\mathbb{P}^{2}_{(1)}$. If $\alpha=3$, then we have that $(X,\,D)=(\mathbb{Q}^{3},\,\mathbb{P}^{1}\times\mathbb{P}^{1})$. If $\alpha=4$, then necessarily $D=\mathbb{P}^{2}$, a case which clearly cannot occur.

If $\alpha=2$ and the index of $X$ is $4$, then necessarily $(X,\,D)=(\mathbb{P}^{3},\,\mathbb{P}^{1}\times\mathbb{P}^{1})$. Otherwise, $X$ has index $2$ and so is a del Pezzo variety. Since $D=\mathbb{P}^{1}\times\mathbb{P}^{1}$ or $\mathbb{P}^{2}_{(1)}$, we have that necessarily $D^{3}=(-K_{D})^{2}=8$. But from \cite[Table 12.1]{Fano}, we read that a corresponding $X$ with this property does not exist. Thus, we can rule out this case.

Now suppose that $\dim X=4$. Then $-K_{X}=\alpha[D]$, where the only possibilities for $\alpha$ are $\alpha=2,\,3,\,4,\,5$. If $\alpha=5$, then we must have $(X,\,D)=(\mathbb{P}^{4},\,\mathbb{P}^{3})$, again a case that we can rule out since $\mathbb{P}^{3}$ does not carry the desired $\mathbb{P}^{1}$-bundle structure \cite{Szurek}. If $\alpha=4$, then we must have $(X,\,D)=(\mathbb{Q}^{4},\,\mathbb{Q}^{3})$, also a case that we can rule out by \cite{Szurek}.

If $\alpha=3$, then $X$ is a del Pezzo variety with $(-K_{D})^{3}=(2([D]|_{D}))^{3}=8D^{4}$, so that $8$ divides $(-K_{D})^{3}$. Since $\dim X=4$, we see from \cite[Table 12.1]{Fano} that necessarily $D^{4}\leq6$, so that \nolinebreak $(-K_{D})^{3}\leq 48$. Also, since $-K_{D}=2([D]|_{D})$, $D$ must have index $2$ in this case. (If $D$ had index $4$, then necessarily $D=\mathbb{P}^{3}$ which cannot occur \cite{Szurek}). Therefore $D$ is a del Pezzo threefold with $(-K_{D})^{3}=8,\,16,\,24,\,32,\,40$ or $48$. We can now read from \cite{Szurek} that the only possibilities
are that $(-K_{D})^{3}=48$, so that $D^{4}=6$, and that $D=\mathbb{P}_{\mathbb{P}^{2}}(T_{\mathbb{P}^{2}})$ or
$\mathbb{P}^{1}\times\mathbb{P}^{1}\times\mathbb{P}^{1}$. Thus, $X$ is a del Pezzo variety of dimension $4$ with anti-canonical divisor $D=\mathbb{P}_{\mathbb{P}^{2}}(T_{\mathbb{P}^{2}})$ or $\mathbb{P}^{1}\times\mathbb{P}^{1}\times\mathbb{P}^{1}$, satisfying $D^{4}=6$. Reading from \cite[Table 12.1]{Fano}, we subsequently deduce that the only possibility is that $(X,\,D)=(\mathbb{P}^{2}\times\mathbb{P}^{2},\,\mathbb{P}_{\mathbb{P}^{2}}(T_{\mathbb{P}^{2}}))$.

Finally, let us consider the case $\alpha=2$. Then we have that $-K_{D}=[D]|_{D}$ by adjunction. If the index of $X$ is $4$, then we must have that $(X,\,D)=(\mathbb{Q}^{4},\,\mathbb{Q}^{3})$, a case that may be ruled out by \cite{Szurek} as above. We therefore henceforth assume that the index of $X$ is $2$.

First suppose that $X$ has Picard number $\rho(X)\geq2$. Then $X$ must appear on \cite[Table 12.7]{Fano} with $D^{4}=(-K_{D})^{3}$.
Using the fact that $\rho(X)=\rho(D)$ by the Lefschetz theorem on hyperplane sections, we read from this table that necessarily $2\leq\rho(D)\leq4$ and $D^{4}\leq 40$. From \cite{Szurek}, it then follows that the only possible values for $D^{4}$ are $30,\,36,\,38$ (any other Fano threefold $D$ on that list with $D^{4}\leq 40$ has $\rho(X)\geq5$). From \cite[Table 12.7]{Fano}, we subsequently read that the only possibility is that $D^{4}=30$, and correspondingly, that $D$ is number $23$ on \cite[Table 12.3]{Fano}. But since this Fano manifold does not appear on the list in \cite{Szurek}, we can rule out this case.

The final case to consider is when $\rho(X)=1$. Then $X$ is a ``Fano fourfold of the first species'' as defined by \cite{Wilson}, and from that paper we read that necessarily $2\leq D^{4}\leq18$ in $X$, so that $2\leq (-K_{D})^{3}\leq 18$. Since $D$ must lie on the list \cite{Szurek}, we see that necessarily $D=\mathbb{P}^{1}\times\mathbb{P}_{(k)}^{2}$, $k=6,\,7,\,8$, so that $\rho(D)>1$. Using again the fact that $\rho(X)=\rho(D)$, we derive a contradiction with the assumption that $\rho(X)=1$. Thus, this case may be ruled out as well.

This completes the proof of Theorem \ref{dunno}.
\end{proof}

\bibliographystyle{amsplain}
\bibliography{ref}

\def\cprime{$'$} \def\cprime{$'$}
\providecommand{\bysame}{\leavevmode\hbox to3em{\hrulefill}\thinspace}
\providecommand{\MR}{\relax\ifhmode\unskip\space\fi MR }
% \MRhref is called by the amsart/book/proc definition of \MR.
\providecommand{\MRhref}[2]{%
  \href{http://www.ams.org/mathscinet-getitem?mr=#1}{#2}
}
\providecommand{\href}[2]{#2}
\begin{thebibliography}{10}

\bibitem{Altmann}
K.~Altmann, \emph{The versal deformation of an isolated toric {G}orenstein
  singularity}, Invent. Math. \textbf{128} (1997), no.~3, 443--479. \MR{1452429
  (98g:14006)}

\bibitem{Bela}
M.~C. Beltrametti and A.~J. Sommese, \emph{The adjunction theory of complex
  projective varieties}, de Gruyter Expositions in Mathematics, vol.~16, Walter
  de Gruyter \& Co., Berlin, 1995. \MR{1318687 (96f:14004)}

\bibitem{Sparky}
S.~Benvenuti, S.~Franco, A.~Hanany, D.~Martelli, and J.~Sparks, \emph{{An
  Infinite family of superconformal quiver gauge theories with Sasaki-Einstein
  duals}}, JHEP \textbf{0506} (2005), 064.

\bibitem{Conlon}
R.~J. Conlon and H.-J. Hein, \emph{Asymptotically conical {C}alabi-{Y}au
  manifolds, {I}}, Duke Math. J. \textbf{162} (2013), 2855--2902.

\bibitem{Conlon3}
\bysame, \emph{Asymptotically conical {C}alabi-{Y}au manifolds, {II}},
  arXiv:1301.5312v2 (2014).

\bibitem{Cvetic2}
M.~Cvetic, H.~Lu, D.~N. Page, and C.N. Pope, \emph{{New Einstein-Sasaki spaces
  in five and higher dimensions}}, Phys.Rev.Lett. \textbf{95} (2005), 071101.

\bibitem{Cvetic1}
\bysame, \emph{{New Einstein-Sasaki and Einstein spaces from Kerr-de Sitter}},
  JHEP \textbf{0907} (2009), 082.

\bibitem{Sparks23}
S.~Franco, A.~Hanany, D.~Martelli, J.~Sparks, D.~Vegh, and B.~Wecht,
  \emph{Gauge theories from toric geometry and brane tilings}, J. High Energy
  Phys. (2006), no.~1, 128, 40 pp. (electronic). \MR{2201204 (2006k:81297)}

\bibitem{futaki}
A.~Futaki, H.~Ono, and G.~Wang, \emph{Transverse {K}\"ahler geometry of
  {S}asaki manifolds and toric {S}asaki-{E}instein manifolds}, J. Differential
  Geom. \textbf{83} (2009), no.~3, 585--635. \MR{2581358}

\bibitem{Sparksy1}
J.~P. Gauntlett, D.~Martelli, J.~Sparks, and D.~Waldram,
  \emph{Sasaki-{E}instein metrics on {$S^2\times S^3$}}, Adv. Theor. Math.
  Phys. \textbf{8} (2004), no.~4, 711--734. \MR{2141499 (2006m:53067)}

\bibitem{Sparkles}
\bysame, \emph{{A New infinite class of Sasaki-Einstein manifolds}},
  Adv.Theor.Math.Phys. \textbf{8} (2006), 987--1000.

\bibitem{Fano}
V.~A. Iskovskikh and Y.~G. Prokhorov, \emph{Fano varieties}, Algebraic
  geometry, {V}, Encyclopaedia Math. Sci., vol.~47, Springer, Berlin, 1999,
  pp.~1--247. \MR{1668579 (2000b:14051b)}

\bibitem{mabuchi}
T.~Mabuchi and Y.~Nakagawa, \emph{New examples of {S}asaki-{E}instein
  manifolds}, Tohoku Math. J. (2) \textbf{65} (2013), no.~2, 243--252.
  \MR{3079287}

\bibitem{Sparkss}
D.~Martelli and J.~Sparks, \emph{{Toric geometry, Sasaki-Einstein manifolds and
  a new infinite class of AdS/CFT duals}}, Commun.Math.Phys. \textbf{262}
  (2006), 51--89.

\bibitem{Stenzel}
M.~B. Stenzel, \emph{Ricci-flat metrics on the complexification of a compact
  rank one symmetric space}, Manuscripta Math. \textbf{80} (1993), no.~2,
  151--163. \MR{1233478 (94f:32020)}

\bibitem{Szurek}
M.~Szurek and J.~A. Wi{\'s}niewski, \emph{Fano bundles of rank {$2$} on
  surfaces}, Compositio Math. \textbf{76} (1990), no.~1-2, 295--305, Algebraic
  geometry (Berlin, 1988). \MR{1078868 (92e:14037)}

\bibitem{Szurek3}
\bysame, \emph{Fano bundles over {${\bf P}^3$} and {$Q_3$}}, Pacific J. Math.
  \textbf{141} (1990), no.~1, 197--208. \MR{1028270 (91g:14036)}

\bibitem{Tian}
G.~Tian and S.-T. Yau, \emph{Complete {K}\"ahler manifolds with zero {R}icci
  curvature. {II}}, Invent. Math. \textbf{106} (1991), no.~1, 27--60.
  \MR{1123371 (92j:32028)}

\bibitem{Wilson}
P.~M.~H. Wilson, \emph{Fano fourfolds of index greater than one}, J. Reine
  Angew. Math. \textbf{379} (1987), 172--181. \MR{903639 (89b:14058)}

\end{thebibliography}
\end{document}